\definecolor{purple}{rgb}{.5,0,1}
\definecolor{orange}{rgb}{1,.5,0}
\definecolor{pink}{rgb}{1,0,.5}
\definecolor{green}{rgb}{0,.5,0}
\definecolor{gold}{rgb}{1,.623,0}
\date{}
\newcommand{\A}{{\mathcal A}}
\newcommand{\Z}{{\mathbbm Z}}
\newcommand{\R}{{\mathbbm R}}
\newcommand{\N}{{\mathbbm N}}
\renewcommand{\S}{{\mathbbm S}}
\newcommand{\e}{{\varepsilon}}
\newcommand{\Dir}{{\mathrm{D}}}
\newcommand{\Neu}{{\mathrm{N}}}
\def\a{{\sf a}}
\def\b{{\sf b}}
\def\ab{{\a\b}}
\newcommand{\dd}{{\mathrm{d}}}
\newcommand{\loc}{{\mathrm{loc}}}
\newcommand{\Hd}{{\mathrm{H}}}
\newtheorem{theorem}{Theorem}[section]
\newtheorem{prop}[theorem]{Proposition}
\theoremstyle{definition}
\newtheorem{remark}[theorem]{Remark}
\newtheorem*{remark*}{Remark}
\theoremstyle{definition}
\theoremstyle{definition}
\theoremstyle{definition}
\newtheorem*{questt}{Question 6.8}
\numberwithin{equation}{section}
\newcommand{\tr}{\mathrm{tr} }
\newcommand{\set}[1]{\left\{#1\right\}}
\newcommand{\eqdef}{\overset{\mathrm{def}}=}
\begin{document}

\title[Spectral Properties of Continuum Fibonacci Operators]{Spectral Properties of Continuum Fibonacci Schr\"odinger Operators}

\author[J.\ Fillman]{Jake Fillman}
\email{fillman@vt.edu}
\thanks{J.\ F.\ was supported in part by an AMS-Simons travel grant, 2016--2018}

\author[M.\ Mei]{May Mei}
\email{meim@denison.edu}
\thanks{M.\ M.\ was supported in part by Denison University and a Woodrow Wilson Career Enhancement Fellowship}

\begin{abstract}
We study continuum Schr\"odinger operators on the real line whose potentials are comprised of two compactly supported square-integrable functions concatenated according to an element of the Fibonacci substitution subshift over two letters. We show that the Hausdorff dimension of the spectrum tends to one in the small-coupling and high-energy regimes, regardless of the shape of the potential pieces.
\end{abstract}

\maketitle


\noindent \textbf{MSC (2010):} 35J10; 37D05, 37D20, 37C45, 28A80

\section{Introduction}\label{s:intro}

\subsection{Background}

Quasicrystals were discovered in the early 1980s by D.\ Schechtman \textit{et al.}\ \cite{SBGC84} and have attracted a substantial amount of attention from researchers in mathematics and science. Broadly speaking, quasicrystals are solids characterized by the coexistence of two characteristics: \emph{aperiodicity} (i.e.\ the absence of translation symmetries) and \emph{long-range order}.
 
Thus, objects generated by or associated with strictly ergodic aperiodic subshifts over finite alphabets furnish concrete mathematical models of quasicrystals. Researchers in mathematics have studied such operators from various points of view including diffraction theory (see \cite{BBG2015} and references therein) and spectral theory (see \cite{Damanik2007:subshifts,DEG2015} and references\ therein). In particular, self-adjoint operators generated by such subshifts (and more recently, unitary operators) have been studied fairly heavily since the 1980s. Until somewhat recently, most of the effort was devoted to discrete Schr\"odinger operators arising in this manner. On the other hand, there have been several recent investigations concerning continuum Schr\"odinger operators \cite{DFG2014,KLS2011,LSS2014}, Jacobi matrices \cite{FTY2016,Yessen2013}, and CMV matrices \cite{DFO2016,DFV2014,DamLenz2007, DMY2013JAT,DMY2013JSP,F2017PAMS,MO2014}.

The (discrete) Fibonacci Hamiltonian is the central model of a 1-dimensional quasicrystal in the discrete Schr\"odinger setting. This model was proposed in the physics literature by Kohmoto--Kadanoff--Tang \cite{KKT1983} and Ostlund \textit{et al.}\ \cite{OPRSS1983}. Casdagli and S\"ut\H{o} \cite{Cas1986CMP,Suto87,Suto89} wrote the seminal mathematics papers on this model, proving that it enjoys purely singular continuous spectrum supported on a Cantor set of zero Lebesgue measure. This spectral type is regarded as characteristic for this class of models. Our understanding of the discrete Fibonacci Hamiltonian is quite advanced; see \cite{DGY} and references therein. However, our knowledge about the continuum Fibonacci Hamiltonian is substantially more rudimentary, owing to a proliferation of nontrivial obstructions, such as unboundedness of the operator, non-constancy of the Fricke--Vogt character on the spectrum, inability to explicitly compute transfer matrices (and hence the Fricke--Vogt character) in all but the simplest cases, interalia.

Continuum versions of the Fibonacci Hamiltonian have been studied in \cite{BJK1992, DFG2014, Ghosh1991, Holzer1988, KN2002, KS1986, TK1989, WSS1988}. The general theory for such continuum operators was established in the papers \cite{DFG2014,LSS2014}. In the case of the Fibonacci subshift, \cite{DFG2014} also established asymptotic behavior of the local Hausdorff dimension of the spectrum in the regimes of large energy and small potentials whenever the potential pieces are given by characteristic functions of intervals of length one. Their work relied on explicit formulae and calculations in an essential fashion and hence could not be immediately generalized to other potentials. Consequently, they posed the following question:
\begin{questt}
Is it true that \lbrack these asymptotics\rbrack\ hold regardless of the shape of the bump? That is, if we replace $f_a=\lambda \cdot \chi_{[0,1)}$ and $f_b=0 \cdot  \chi_{[0,1)}$ by general $f_a \in L^2(0, \ell_a)$ and $f_b \in L^2(0, \ell_b)$, do \lbrack these asymptotics\rbrack\ continue to hold as stated?
\end{questt}
In this paper, we answer this question in the affirmative in the regimes of small coupling and large energy. As of right now, the regime of large coupling is out of reach. For the remainder of Section~\ref{s:intro}, we provide background on this class of operators and state our results at the end of the section.

\subsection{Schr\"odinger Operators Associated with Subshifts over Finite Alphabets} \label{s:subshift}
In this section, we will introduce the operators to be studied in the present work. First, we define the notion of concatenation of real-valued functions defined on intervals. Assume that for each $n \in \Z$, we have $\ell_n \in \R_+ \eqdef (0,\infty)$ and a real-valued function $f_n$ defined on $[0,\ell_n)$. Moreover, assume that
\begin{equation} \label{eq:lndiv}
\sum_{n\geq 0 } \ell_n 
= 
\sum_{n < 0} \ell_n 
= 
\infty.
\end{equation}
The condition \eqref{eq:lndiv} ensures that ensuing function will have domain $\R$. We define the \emph{concatenation} of the sequence $\{f_n\}_{n \in \Z}$ as follows. Put
\begin{equation} \label{eq:concatEndptDef}
s_n
\eqdef
\begin{cases}
\sum_{j=0}^{n-1} \ell_j & n \geq 1 \\
0 & n = 0 \\
-\sum_{j=n}^{-1} \ell_j & n \leq -1,
\end{cases}
\end{equation}
denote $I_n = [s_{n},s_{n+1})$, and define
\begin{equation}\label{eq:concatDef}
f(x)
=
f_n(x - s_{n}),\quad
\text{ for each } x \in I_n.
\end{equation}
We will denote this concatenation 
\begin{equation} \label{eq:finiteConcatDef}
f
=
\left( \cdots \; | \; f_{-2} \; | \; f_{-1} \; | \; \boxed{f_0} \; | \; f_1 \; | \; f_2 \; | \;  \cdots  \right).
\end{equation}
We use a box to indicate the position of the origin. One can also concatenate finite families of functions. Given $m\leq 0 \leq n$, and $\{f_j,\ell_j\}_{j=m}^n$ as above, we define $\left( f_{m}| \cdots |
\, \boxed{f_0} \, | \cdots |f_n \right)$ on $[s_m, s_{n+1})$ via \eqref{eq:concatEndptDef} and \eqref{eq:concatDef}.

Let $\mathcal{A}$ be a finite set, called the \emph{alphabet}. Equip $\mathcal{A}$ with the discrete topology and endow $\mathcal{A}^\Z$ with the corresponding product topology. The \emph{left shift} 
\[
[T \omega](n) \eqdef \omega(n+1),
\quad \omega \in \A^{\Z}, \; n \in \Z,
\]
defines a homeomorphism from $\A^{\Z}$ to itself. A subset $\Omega \subseteq \mathcal{A}^\Z$ is called \emph{$T$-invariant} if $T^{-1}(\Omega) = \Omega$. Any compact $T$-invariant subset of $\A^\Z$ is called a \emph{subshift}.

We can use the concatenation construction above to associate potentials (and hence Schr\"odinger operators) to elements of subshifts as follows. For each $\alpha \in \mathcal{A}$, we pick $\ell_\alpha > 0$ and a real-valued function $f_\alpha \in L^2[0,\ell_\alpha)$. Then, for any $\omega \in \A^\Z$, we define the continuum Schr\"odinger operator
\begin{equation}\label{eq:Schrod}
H_\omega 
= 
- \frac{\dd^2}{\dd x^2} + V_\omega
\end{equation}
in $L^2(\R)$ via 
\begin{equation} \label{eq:VomegaDef}
V_{\omega} 
= 
V_{\omega,f_\a,f_\b}
\eqdef
\left( \cdots \; f_{\omega_{-2}} \; | \; f_{\omega_{-1}} \; | \; \boxed{f_{\omega_0}} \; | \; f_{\omega_1} \; | \; f_{\omega_2} \; \cdots \right).
\end{equation}
These potentials belong to $L^2_\mathrm{loc, unif}(\R)$ and hence each $H_\omega$ defines a self-adjoint operator in $L^2(\R)$ in a canonical fashion.

\subsection{The Fibonacci Subshift}

In this paper, we study a special case of the foregoing construction, namely potentials generated by elements of the Fibonacci subshift. In this case, the alphabet contains two symbols, $\A \eqdef \{ \a,\b \}$. The Fibonacci substitution is the map
\[
S(\a) = \a\b, \; S(\b) = \a.
\]
This map extends by concatenation to $\A^*$, the free monoid over $\A$ (i.e.\ the set of finite words over $\A$), as well as to $\A^{\N}$, the collection of (one-sided) infinite words over $\A$. There exists a unique element 
\[
u
=
\a\b\a\a\b\a\b\a\ldots \in \A^{\N}
\]
with the property that $u = S(u)$. It is straightforward to verify that for $n \in \N$, $S^n(\a)$ is a prefix of $S^{n+1}(a)$. Thus, one obtains $u$ as the limit (in the product topology on $\mathcal{A}^\mathbb{N}$) of the sequence of finite words $\{ S^n(\a) \}_{n \in \N}$. With this setup, the Fibonacci subshift is defined to be the collection of two-sided infinite words with the same local factor structure as $u$, that is,
\[
\Omega
\eqdef
\{ \omega \in \mathcal{A}^\Z : \text{every finite subword of $\omega$ is also a subword of } u \}.
\]
Given $\ell_\a, \ell_\b \in \R_+$ and real-valued functions $f_\a \in L^2[0,\ell_\a)$, $f_\b \in L^2[0,\ell_\b)$, we consider the family of continuum Schr\"odinger operators $\{H_\omega\}_{\omega \in \Omega}$ defined by \eqref{eq:Schrod} and \eqref{eq:VomegaDef}. Since $(\Omega,T)$ is a minimal dynamical system, one can verify that there is a uniform closed set $\Sigma = \Sigma(f_\a,f_\b) \subset \R$ with the property that
\[
\sigma(H_\omega)
=
\Sigma
\text{ for every }
\omega \in \Omega.
\]

Of course, one can choose $f_\a$ and $f_\b$ in such a way that every $V_\omega$ is a periodic potential (notice that as soon as $V_{\omega_0}$ is periodic for a single $\omega_0 \in \Omega$, then every $V_\omega$ is periodic by minimality). The main result of \cite{DFG2014} is that this is the only possible obstruction to Cantor spectrum. Concretely, we assume that the system $(\Omega,f_\a,f_\b)$ is \emph{aperiodic} in the sense that the potentials $V_\omega$ described above are not periodic.

\begin{theorem}[Damanik--F.--Gorodetski \cite{DFG2014}]
Let $\Omega$ denote the Fibonacci subshift over $\A = \{\a,\b\}$. If $(\Omega,f_\a,f_\b)$ is  aperiodic, then $\Sigma$ is a Cantor set of zero Lebesgue measure.
\end{theorem}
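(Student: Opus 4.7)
The plan is to reduce the spectral analysis to the dynamics of the Fibonacci trace map, following the approach pioneered by S\"ut\H{o} in the discrete setting, adapted to accommodate the continuum phenomenon that the Fricke--Vogt invariant varies nontrivially with the energy. First, for each $\alpha \in \A$ and each $E \in \R$, let $M_\alpha(E) \in \SL(2,\R)$ denote the transfer matrix over $[0,\ell_\alpha]$ for $-u'' + f_\alpha u = Eu$. For any finite word $w = w_1 \ldots w_k$ over $\A$, set $M_w \eqdef M_{w_k}\cdots M_{w_1}$, and put $M_n(E) \eqdef M_{S^n(\a)}(E)$. The Fibonacci identity $S^{n+1}(\a) = S^n(\a)\,S^{n-1}(\a)$ gives $M_{n+1} = M_{n-1}M_n$, so passing to the half-traces $x_n(E) \eqdef \tfrac{1}{2}\tr M_n(E)$ and using Cayley--Hamilton yields the trace recursion
\[
x_{n+1} = 2 x_n x_{n-1} - x_{n-2}.
\]
A direct computation shows that the Fricke--Vogt quantity $I_n(E) \eqdef x_{n-1}^2 + x_n^2 + x_{n+1}^2 - 2x_{n-1}x_n x_{n+1} - 1$ is independent of $n$; denote its common value by $I(E)$.

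Next, I would invoke the general characterization for strictly ergodic subshift Schr\"odinger operators (as in \cite{LSS2014}) expressing the spectrum as the set of energies whose Schr\"odinger cocycle fails to be uniformly hyperbolic. In the Fibonacci case this translates to
\[
\Sigma = \{ E \in \R : \{x_n(E)\}_{n \geq 0} \text{ is bounded}\},
\]
converting the geometric problem into a dynamical one about the bounded orbits of the Fibonacci trace map $T(x,y,z) = (y,z,2yz-x)$. On each invariant level surface $S_I \eqdef \{(x,y,z) : I(x,y,z) = I\}$ with $I > 0$, $T$ is known to be uniformly hyperbolic on its nonwandering set, which is a horseshoe of Hausdorff dimension strictly less than $\dim S_I$. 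From this hyperbolic structure I would extract, uniformly in $I$ on compact subsets of $(0,\infty)$, that the set of energies with bounded trace orbit lying over any such level has Lebesgue measure zero. Transversality of the curve $E \mapsto (x_{n-1}(E),x_n(E),x_{n+1}(E))$ to the foliation $\{S_I\}_I$ at bounded-orbit points (which I expect to follow from a Wronskian identity precluding simultaneous vanishing of the trace derivatives) lets one integrate this conclusion into $\Leb(\Sigma) = 0$.

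The aperiodicity hypothesis enters essentially to rule out the degenerate regime $I(E) \equiv 0$: periodic potentials correspond (in the discrete model) to the trace orbit being eventually constant at spectral energies, and a continuum analogue of S\"ut\H{o}'s classification should show that if $\{E : I(E) = 0\}$ met $\Sigma$ in a set of positive Lebesgue measure, then $V_\omega$ would be forced to be periodic, contradicting the assumption. I expect this $I \equiv 0$ analysis to be the main obstacle: in the continuum model $x_n(E)$ is a transcendental entire function of $E$ rather than a polynomial, so the standard algebraic arguments do not apply and one must invoke Kotani theory together with the ergodic properties of $(\Omega,T)$ to upgrade vanishing of $I$ on a large set into almost periodicity, and then into actual periodicity, of the potential.

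Finally, $\Sigma$ has no isolated points because minimality of $(\Omega,T)$ forces $\Sigma$ to coincide with the essential spectrum of each $H_\omega$, which is perfect; combined with empty interior (an immediate consequence of $\Leb(\Sigma) = 0$) and compactness of $\Sigma \cap K$ for any compact $K$, this identifies $\Sigma$ as a Cantor set of zero Lebesgue measure, as claimed.
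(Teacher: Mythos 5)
Your overall architecture is genuinely different from the proof in \cite{DFG2014} (which this paper only cites, not reproves), and as sketched it has a real gap at the degenerate level surface. The horseshoe-plus-transversality mechanism can only control $\Sigma\cap\{E: I(E)>0\}$: on $S_0$ the bounded orbits include the \emph{entire} compact component of the surface (the topological sphere through the four conic singularities), there is no uniform hyperbolicity to exploit, and indeed by Theorem~\ref{t.fiblocaldim}(ii) the local Hausdorff dimension equals $1$ exactly where $I(E)=0$, so no dimension bound below $1$ is available there. Your proposed repair --- that $I(E)=0$ on a positive-measure subset of $\Sigma$ forces periodicity of the potential --- is not the right statement and you offer no mechanism for it: $I$ vanishes asymptotically at spectral energies of genuinely aperiodic continuum models (that is the content of Section~\ref{sec:highE} of this paper, where $I(E)=\mathcal O(E^{-1/2})$), so vanishing of $I$ carries no periodicity information. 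Separately, the transversality you invoke is misidentified: transversality of $\gamma$ to the foliation $\{S_I\}_I$ amounts to $I'(E)\neq 0$, which is false in general (in the discrete model $I$ is constant in $E$); what is actually needed, and what \cite{DFG2014} proves for the dimension results, is transversality of the curve of initial conditions to the center-stable lamination of the family of horseshoes, a different and substantially harder statement that does not follow from a Wronskian identity alone.

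For comparison, the proof in \cite{DFG2014} does not use the trace map for this theorem at all. The Fibonacci subshift satisfies the Boshernitzan condition, so the continuum extension of the Damanik--Lenz uniformity results shows the Lyapunov exponent $L(E)$ exists uniformly; combined with the uniform-hyperbolicity characterization of the resolvent set (the result you correctly cite from \cite{LSS2014}), this yields $\Sigma=\{E: L(E)=0\}$, since uniform convergence plus $L(E)>0$ forces uniform hyperbolicity. Kotani theory for aperiodic potentials of finite local complexity (non-determinism) then gives $\Leb\bigl(\{E: L(E)=0\}\bigr)=0$, and the Cantor structure follows from perfectness of the (essential) spectrum together with empty interior, as in your last paragraph. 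In short, you deploy the hard dynamical machinery where a soft ergodic-theoretic argument suffices, and you are still left needing Kotani theory precisely on the set where that machinery degenerates; the trace map, the invariant, and Propositions~\ref{p.sigmab} and~\ref{p.nonneginv} are reserved in \cite{DFG2014} for the finer local-dimension statements such as Theorem~\ref{t.fiblocaldim}.
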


\begin{remark}
In \cite{DFG2014}, the authors also assumed a condition on $\{f_\alpha:\alpha\in\A\}$ that they called \emph{irreducibility}. This condition is defined so that the potentials satisfy the \emph{simple finite decomposition property} (SFDP) from \cite{KLS2011}. However, since our alphabet only has two letters, SFDP follows from aperiodicity and \cite[Proposition~3.5]{KLS2011}. 
\end{remark}


Spectral properties of the family $\{H_\omega\}_{\omega \in \Omega}$ are encoded by dynamical characteristics of an associated polynomial map $T:\R^3 \to \R^3$. Concretely, every energy $E \in \R$ corresponds to a point $\gamma(E) \in \R^3$ via a (model dependent) map, $\gamma$, called the \emph{curve of initial conditions}. An energy $E$ belongs to $\Sigma$ if and only if $\gamma(E)$ has a bounded forward orbit under the action of $T$. We will describe this correspondence (and precisely define $\gamma$ and $T$) in Section~\ref{sec:bg}.	

Throughout the remainder of the paper, we assume that $\ell_\a, \ell_\b > 0$, $f_\a \in L^2[0,\ell_\a)$, and $f_\b \in L^2[0,\ell_\b)$ are given so that $(\Omega,f_\a,f_\b)$ is aperiodic. It will actually be quite helpful to introduce a third pair $(\ell_\ab,f_\ab)$, defined by
\[
\ell_\ab = \ell_\a + \ell_\b,
\quad
f_\ab(x)
=
\begin{cases}
f_\a(x) & 0 \leq x < \ell_\a \\
f_\b(x-\ell_\a) & \ell_\a \leq x < \ell_\ab.
\end{cases}
\]
Equivalently, using the notation from \eqref{eq:finiteConcatDef}, we could write $f_\ab = (\, \boxed{f_\a} \, | f_\b)$. We denote by $\widetilde\A = \set{\a,\b,\ab}$ the enlarged ``alphabet''. 

Our main results show that the local Hausdorff dimension of the spectrum tends to one in the high-energy and small-coupling regimes, regardless of the shape of $f_{\a}$ and $f_{\b}$. This answers \cite[Question~6.8]{DFG2014} in the affirmative in those two asymptotic regimes in full generality.

\begin{theorem}\label{t:largeE}
The local Hausdorff dimension of $\Sigma$ tends to one in the high-energy region. That is,
\[
\lim_{K \to \infty}\ \ \inf_{E \in \Sigma\cap[K,\infty)}\ \dim_\Hd^\loc(\Sigma;E)
=
1.
\]

\end{theorem}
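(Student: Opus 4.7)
The plan is to translate the theorem into a statement about the trace-map dynamics and then exploit high-energy asymptotics of single-cell transfer matrices. Recall that $E\in\Sigma$ iff the forward $T$-orbit of $\gamma(E)=(x_\ab(E),x_\a(E),x_\b(E))$ is bounded, where $x_\alpha(E)=\tfrac12\tr M_\alpha(E)$, and that $T$ preserves the family of surfaces
\[
S_I=\{(x,y,z)\in\R^3:x^2+y^2+z^2-2xyz-1=I\}.
\]
On $S_I$ with $I>0$, the standard theory (Cantat, Damanik--Gorodetski) provides a uniformly hyperbolic locally maximal invariant horseshoe $\Lambda_I$ whose Hausdorff dimension tends to $2$ as $I\searrow 0$. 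Together with transversality of $\gamma$ to the strong-stable foliation of $\Lambda_I$ (valid in the aperiodic regime), this pinches the local Hausdorff dimension of $\Sigma$ at $E$ between quantities that approach $1$ as $I(E):=I(\gamma(E))\to 0$. Hence the theorem reduces to showing $I(E)\to 0$ as $E\to\infty$.

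To prove this asymptotic, I would first derive the leading-order behavior of $M_\alpha(E)$ for $\alpha\in\widetilde{\A}$. Writing $E=k^2$ and factoring out the free propagator of length $\ell_\alpha$, the Volterra integral representation should yield
\[
M_\alpha(k^2)=R(k\ell_\alpha)+o(1),\qquad k\to\infty,
\]
where $R(\theta)$ is the free transfer matrix (a rotation after the standard energy-dependent rescaling). The error is controlled by oscillatory integrals of the form $\int_0^{\ell_\alpha}f_\alpha(s)e^{\pm 2iks}\,\dd s$, which tend to $0$ by Riemann--Lebesgue since $f_\alpha\in L^2(0,\ell_\alpha)\subset L^1(0,\ell_\alpha)$. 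In particular $x_\alpha(E)=\cos(k\ell_\alpha)+o(1)$. Because $\ell_\ab=\ell_\a+\ell_\b$, the free transfer matrices satisfy $R(k\ell_\ab)=R(k\ell_\a)R(k\ell_\b)$ and commute, so the Fricke identity
\[
I\!\left(\tfrac12\tr A,\tfrac12\tr B,\tfrac12\tr AB\right)=\tfrac14\bigl(\tr(ABA^{-1}B^{-1})-2\bigr),\qquad A,B\in SL(2,\R),
\]
yields $I(\cos k\ell_\ab,\cos k\ell_\a,\cos k\ell_\b)\equiv 0$ identically in $k$. Combining this with the transfer-matrix expansion and using continuity of $I$ then gives $I(E)\to 0$, closing the reduction.

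The main technical obstacle is the transfer-matrix expansion itself for potentials assumed only in $L^2$ rather than $L^\infty$. The Volterra iteration produces integrals of $f_\alpha$ against polynomially bounded oscillatory factors; controlling these rigorously requires a Cauchy--Schwarz argument together with an $L^2$-approximation of $f_\alpha$ by smooth bumps, enough to upgrade qualitative decay of the oscillatory integrals into the uniform $o(1)$ estimate above. A secondary, largely formal, obstacle is to verify uniformity of transversality of $\gamma$ to the strong-stable foliation of $\Lambda_I$ as $E\to\infty$; this should reduce, via continuity of the foliation in the parameter $I$ as $I\to 0^+$, to an inspection of the leading-order form of $\gamma'(E)$, which (computing $\tfrac{d}{dE}\cos(\sqrt{E}\,\ell_\alpha)$) is generically transverse to the stable directions on the Cayley cubic.
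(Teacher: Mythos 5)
Your proposal is correct and follows essentially the same route as the paper: reduce the theorem to showing $I(E)\to 0$ as $E\to\infty$ (the paper simply invokes Theorem~\ref{t.fiblocaldim} rather than re-deriving the horseshoe/transversality machinery), establish $x_\alpha(E)=\cos(\sqrt{E}\,\ell_\alpha)+o(1)$ for $\alpha\in\widetilde\A$ from the Volterra representation, and observe that the Fricke--Vogt invariant vanishes identically on the free traces because $\ell_\ab=\ell_\a+\ell_\b$. The only difference of substance is that the paper cites the P\"oschel--Trubowitz estimate to obtain the quantitative rate $I(E)=\mathcal O(E^{-1/2})$, whereas your Riemann--Lebesgue argument yields only $o(1)$ --- enough for the theorem as stated, but not for the quantitative lower bound on the dimension recorded in the remark that follows the proof.
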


\begin{theorem} \label{t:smalllambda}
With notation as above, let $\Sigma_\lambda = \Sigma(\lambda f_\a, \lambda f_\b)$ for $\lambda \in \R$. We have
\[
\lim_{\lambda \downarrow 0}\ \inf_{E \in \Sigma_\lambda \cap \R_+} \dim_\Hd^\loc(\Sigma_\lambda;E)
=
1.
\]
\end{theorem}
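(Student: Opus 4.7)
The plan is to work within the trace-map dynamical framework developed in Section~\ref{sec:bg} and reduce Theorem~\ref{t:smalllambda} to a single ``the Fricke--Vogt character is small'' dimension estimate, shared with Theorem~\ref{t:largeE}. Recall that $E \in \Sigma_\lambda$ if and only if the forward orbit of the initial-condition vector $\gamma^\lambda(E) \in \R^3$ under the Fibonacci trace map $T$ is bounded, and that $T$ preserves a polynomial invariant $I$ (the Fricke--Vogt character). For each $I_0 > 0$, write $S_{I_0} \eqdef \{I = I_0\}$; the set $\Omega_{I_0} \subset S_{I_0}$ of points with bounded forward orbit under $T|_{S_{I_0}}$ is a locally maximal hyperbolic set, and the spectrum $\Sigma_\lambda$ corresponds locally to the transverse intersections of the curve $\gamma^\lambda$ with the stable lamination $W^s(\Omega_{I_0})$.

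First I would verify that $I \equiv 0$ along the free-Laplacian curve $\gamma^0$. When $\lambda = 0$, the half-traces are $x_\alpha^0(E) = \cos(\sqrt{E}\,\ell_\alpha)$ for $\alpha \in \widetilde\A$, and the identity $\cos(\sqrt{E}\,\ell_\ab) = \cos(\sqrt{E}\,\ell_\a)\cos(\sqrt{E}\,\ell_\b) - \sin(\sqrt{E}\,\ell_\a)\sin(\sqrt{E}\,\ell_\b)$ together with $\cos^2 + \sin^2 = 1$ yields $I(\gamma^0(E)) = 0$ for every $E > 0$. Because the transfer matrices depend continuously on the coupling, $\gamma^\lambda \to \gamma^0$ locally uniformly, and the variation-of-parameters bound $x_\alpha^\lambda(E) = \cos(\sqrt{E}\,\ell_\alpha) + O(\lambda/\sqrt{E})$ then gives $\sup_{E \in \R_+} I(\gamma^\lambda(E)) \to 0$ as $\lambda \to 0$---after splitting the supremum into a compact-$E$ part (controlled by local uniform convergence) and a high-$E$ part (controlled by the explicit $O(\lambda/\sqrt{E})$ bound).

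Second I would establish the core uniform dimension estimate: there exists $\phi \colon (0,\infty) \to (0,1)$ with $\phi(I_0) \to 1$ as $I_0 \to 0^+$ such that, whenever $\gamma^\lambda(E) \in W^s(\Omega_{I_0})$ with $I_0 = I(\gamma^\lambda(E))$ and the intersection is transverse, $\dim_\Hd^\loc(\Sigma_\lambda; E) \geq \phi(I_0)$. The ``curve meets stable lamination'' dictionary (as used in \cite{DFG2014}) identifies this local Hausdorff dimension with the stable dimension $\dim^s(\Omega_{I_0})$ up to a bi-Lipschitz factor; it remains to show $\dim^s(\Omega_{I_0}) \to 1$ as $I_0 \to 0^+$. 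This is a statement about the dynamics of $T$ alone: Bowen's formula identifies $\dim^s(\Omega_{I_0})$ as the unique $t$ solving $P(-t\log\|DT|_{E^u}\|) = 0$, where $P$ denotes the topological pressure of $T|_{\Omega_{I_0}}$. Since the topological entropy of $T|_{\Omega_{I_0}}$ is bounded below by a positive constant uniformly in $I_0 > 0$ (the Fibonacci symbolic dynamics on $\Omega_{I_0}$ gives entropy at least $\log\varphi$, with $\varphi$ the golden mean) while the unstable expansion $\log\|DT|_{E^u}\|$ tends uniformly to $0$ on $\Omega_{I_0}$ as $I_0 \to 0^+$, the variational principle forces $t \to 1$.

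With these ingredients the theorem follows: fix $\delta > 0$, choose $I_* > 0$ so that $\phi(I_0) > 1 - \delta$ whenever $I_0 < I_*$, and take $\lambda_0$ so small that $I(\gamma^\lambda(E)) < I_*$ for all $\lambda < \lambda_0$ and every $E > 0$. The \textbf{main obstacle} is the dynamical asymptotic $\dim^s(\Omega_{I_0}) \to 1$ as $I_0 \downarrow 0$. The family $\{\Omega_{I_0}\}_{I_0 > 0}$ is a smooth family of hyperbolic sets that degenerates at $I_0 = 0$---the surface $S_0 = \{I = 0\}$ does not carry a transverse horseshoe---so standard continuity results for Hausdorff dimension of hyperbolic sets do not directly apply. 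Making the Bowen argument precise requires quantitative control on the unstable Jacobian of $T$ near the degenerate surface, most directly obtained via periodic-orbit approximation together with Lyapunov-exponent bounds along Fibonacci-periodic orbits of $T$ in the small-$I_0$ regime. This technical core is shared with Theorem~\ref{t:largeE}: the only difference between the two results is that ``small $I_0$'' is enforced by ``large $E$'' in the first case and by ``small $\lambda$'' in the second.
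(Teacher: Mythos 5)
Your first step is exactly the paper's proof of this theorem: reduce to showing $\sup_{E>0}|I(E,\lambda)|\to 0$ as $\lambda\to 0$, verify that $I$ vanishes identically on the free curve via the cosine addition formula, and control the supremum by splitting into a high-energy region (where the $\mathcal O(E^{-1/2})$ bound from the proof of Theorem~\ref{t:largeE} holds uniformly for $|\lambda|\le 1$) and a compact region (where continuity of the Dirichlet and Neumann data in the potential gives uniform convergence of the half-traces to $\cos(\sqrt E\,\ell_\alpha)$). That part is correct and complete. Where you diverge is in your second step: the entire dynamical package --- hyperbolicity of the nonwandering sets on $S_{I_0}$, the transversality dictionary, and the asymptotic that the stable dimension tends to $1$ as $I_0\downarrow 0$ --- is not proved in this paper at all; it is imported wholesale as Theorem~\ref{t.fiblocaldim} from \cite{DFG2014}, which supplies a continuous function $D:[0,\infty)\to(0,1]$ with $\dim_\Hd^\loc(\Sigma_\lambda;E)=D(I(E,\lambda))$, $D(0)=1$, and $1-D(I)\sim\sqrt I$. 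So what you flag as the ``main obstacle'' is, in the context of this paper, an off-the-shelf input, and the theorem follows from uniform smallness of the invariant together with continuity of $D$ at $0$.

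If you do intend to re-derive that dynamical input, your sketch of it is not correct as stated. The unstable expansion of $T$ does \emph{not} tend uniformly to zero on $\Omega_{I_0}$ as $I_0\downarrow 0$: the limiting surface $S_0$ carries hyperbolic singular fixed points (e.g.\ $(1,1,1)$, where $DT$ has eigenvalue $(3+\sqrt5)/2$), so orbits of $\Omega_{I_0}$ passing near the singularities retain definite expansion. Moreover, if expansion really did tend uniformly to zero while the entropy stayed bounded below, the root of Bowen's equation $P(-t\log\|DT|_{E^u}\|)=0$ would be forced \emph{above} $1$, contradicting the fact that the stable dimension of a surface horseshoe is at most $1$; your variational-principle heuristic cannot close this way. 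The actual proof in \cite{DFG2014} (building on Damanik--Gorodetski's weak-coupling analysis) requires a normal-form study of the dynamics near the four conical singularities of $S_0$, which is where the $1-D(I)\sim\sqrt I$ rate comes from. Since the paper cites rather than reproves this, your overall argument is sound once you replace your second step by an appeal to Theorem~\ref{t.fiblocaldim}.
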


The rest of the paper is laid out as follows. In Section~\ref{sec:bg}, we describe some background information, including the trace-map formalism for the operator family $\{H_\omega\}_{\omega \in \Omega}$. We prove Theorem~\ref{t:largeE} in Section~\ref{sec:highE} and we prove Theorem~\ref{t:smalllambda} in Section~\ref{sec:smalllambda}.

\section{Trace Map, Fricke-Vogt Invariant, and Local Hausdorff Dimension of the Spectrum} \label{sec:bg}

The spectrum (and many spectral characteristics) of the continuum Fibonacci model can be encoded in terms of an associated polynomial diffeomorphism of $\R^3$, called the \emph{trace map}. We will make this correspondence explicit, following \cite{DFG2014}. First, we consider the differential equation
\begin{equation} \label{eq:TimeIndepSchrod}
-y''(x) + f_\alpha(x) y(x) 
= 
E y(x),
\quad
\alpha \in \widetilde\A,\;
E \in \R, \;
x \in [0,\ell_\alpha].
\end{equation} 
Denote the solution of \eqref{eq:TimeIndepSchrod} obeying $y(0) = 0$, $y'(0) = 1$ (respectively, $y(0) = 1$, $y'(0) = 0$) by $\psi_{\alpha,\Dir}(\cdot,E)$ (respectively, $\psi_{\alpha,\Neu}(\cdot,E)$).  The associated transfer matrices are then given by
\begin{align*}
M_\alpha(E) 
& = 
\begin{bmatrix} 
\psi_{\alpha,\Neu}(\ell_\alpha,E)  & \psi_{\alpha,\Dir}(\ell_\alpha,E) \\ 
\psi_{\alpha,\Neu}'(\ell_\alpha,E) & \psi_{\alpha,\Dir}'(\ell_\alpha,E) 
\end{bmatrix}, 
\quad
 \alpha \in \widetilde\A, \; E \in \R,
\end{align*}
and
\[
x_{\alpha}(E)
=
\frac{1}{2}
\tr(M_\alpha(E))
=
\frac{1}{2}\left(
\psi_{\alpha,\Neu}(\ell_\alpha,E) + \psi_{\alpha,\Dir}'(\ell_\alpha,E) \right),
\quad
\alpha \in \widetilde\A, \; E \in \R.
\]
It is straightforward to verify that
\[
M_\ab(E)
=
M_\b(E)M_\a(E).
\]
The map $\gamma(E) \eqdef (x_\ab(E), x_\a(E), x_\b(E))$ is known as the \emph{curve of initial conditions}. Then, the \emph{trace map} is defined by
$$
T(x,y,z) 
\eqdef
(2 xy-z,x,y),
\quad
x,y,z \in \R.
$$
This map is known to have a first integral given by the so-called \emph{Fricke--Vogt invariant}, defined by
$$
I(x,y,z) \eqdef x^2 + y^2 + z^2 - 2 xyz -1,
\quad
x,y,z \in \R.
$$
More precisely, the trace map $T$ preserves $I$ (in the sense that $I\circ T = I$), and hence $T$ preserves the level surfaces of $I$:
$$
S_V
\eqdef
\{ (x,y,z) \in \R^3 : I(x,y,z) = V \}.
$$
Consequently, every point of the form $T^n(x_\ab(E), x_\a(E), x_\b(E))$ with $n \in \Z_+$ lies on the surface $S_{I(\gamma(E))}$. For the sake of convenience, we put
$$
I(E) 
\eqdef
I(\gamma(E))
=
I(x_\ab(E), x_\a(E), x_\b(E)),
$$
with a minor abuse of notation.

The surfaces $S_V$ experience a transition at $V = 0$. When $V<0$, $S_V$ has one compact connected component which is diffeomorphic to the 2-sphere $\S^2$, and four unbounded connected components, each of which is diffeomorphic to the open unit disk. When $V = 0$, each of the four unbounded components meet the compact component, forming four conical singularities. As soon as $V >0$, the singularities resolve; then, $S_V$ is smooth, connected, and diffeomorphic to $\S^2$ with four points removed.

The trace map is important in the study of operators of this type as its dynamical spectrum encodes the operator-theoretic spectrum of $H$. Concretely, the dynamical spectrum is defined by
$$
B 
\eqdef
\set{ E \in \R : \{ T^n(\gamma(E)) : n \in \Z_+ \} \text{ is bounded} }.
$$

\begin{prop}[Damanik--F.--Gorodetski \cite{DFG2014}]\label{p.sigmab}
We have $\Sigma = B$.
\end{prop}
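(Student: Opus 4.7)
I would follow the classical Sütő trace-map strategy, adapted to the continuum setting. The first step is to match iterates of $T$ with traces of transfer matrices over Fibonacci prefixes. Setting $M_n(E) := M_{S^n(\a)}(E)$ and $x_n(E) := \tfrac{1}{2}\tr M_n(E)$, the substitution identity $S^{n+1}(\a) = S^n(\a)\,S^{n-1}(\a)$, combined with the trace identity $\tr(AB) + \tr(AB^{-1}) = \tr(A)\tr(B)$ for $A,B \in \mathrm{SL}(2,\R)$, yields the three-term recursion $x_{n+1} = 2 x_n x_{n-1} - x_{n-2}$. A direct computation then identifies $T^n(\gamma(E)) = (x_{n+1}(E),\, x_n(E),\, x_{n-1}(E))$, so that $E \in B$ if and only if $\sup_n |x_n(E)| < \infty$.

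Next I would establish the standard Sütő dichotomy: for each $E$, either $|x_n(E)| \le 1$ holds for infinitely many $n$ (in which case the orbit stays bounded and $E \in B$), or $|x_n(E)| > 1$ for all sufficiently large $n$, in which case the recursion together with preservation of the Fricke--Vogt invariant $I(E)$ forces $|x_n(E)|$ to grow super-exponentially in $n$. This follows from a short induction reminiscent of Sütő's original discrete argument.

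For the inclusion $B \subset \Sigma$ I would use periodic approximation. For each $n$, let $H_n^{\mathrm{per}}$ be the periodic Schr\"odinger operator whose potential periodically repeats the concatenation associated with the word $S^n(\a)$. Standard Floquet theory gives $E \in \sigma(H_n^{\mathrm{per}})$ if and only if $|x_n(E)| \le 1$. Given $E \in B$, choose $n_k \to \infty$ with $|x_{n_k}(E)| \le 1$, pick $\omega^{(k)} \in \Omega$ whose potential agrees with that of $H_{n_k}^{\mathrm{per}}$ on an interval around the origin of length $L_k \to \infty$, and pass to a subsequential limit $\omega^{(\infty)} \in \Omega$. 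Strong resolvent convergence then places $E \in \sigma(H_{\omega^{(\infty)}}) = \Sigma$.

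The reverse inclusion $\Sigma \subset B$ is the principal obstacle. If $E \notin B$, the dichotomy gives super-exponential growth of $\|M_n(E)\|$ along the Fibonacci scales. One must upgrade this discrete-scale growth to the continuum statement that no polynomially bounded generalized eigenfunction of $-y'' + V_\omega y = Ey$ exists for any $\omega \in \Omega$. The continuum-specific difficulty is interpolating between the partition endpoints $s_n$: a Gronwall-type estimate, using uniform local boundedness of the $L^2$ norm of $V_\omega$, controls intermediate transfer matrices by their values at the endpoints. Combined with the combinatorial fact that every finite factor of any $\omega \in \Omega$ eventually occurs as a subword of some $S^n(\a)$, this precludes polynomial boundedness of all solutions. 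A continuum Schnol-type theorem for potentials in $L^2_{\mathrm{loc, unif}}(\R)$ then yields $E \notin \Sigma$.
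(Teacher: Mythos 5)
The paper does not prove this proposition itself --- it is quoted from \cite{DFG2014} --- so I am comparing your sketch with the argument given there (which follows S\"ut\H{o}'s original scheme). Your overall architecture is the right one: the substitution identity $S^{n+1}(\a)=S^n(\a)S^{n-1}(\a)$ together with $\tr(AB)+\tr(AB^{-1})=\tr A\,\tr B$ does give $x_{n+1}=2x_nx_{n-1}-x_{n-2}$ and the identification $T^n(\gamma(E))=(x_{n+1},x_n,x_{n-1})$, and the inclusion $B\subseteq\Sigma$ does go through periodic approximants $\sigma_n=\{E:|x_n(E)|\le 1\}$. (One quibble there: strong resolvent convergence gives $\sigma(H_{\omega^{(\infty)}})\subseteq\liminf\sigma(H_{n_k}^{\mathrm{per}})$, which is the \emph{wrong} direction for your purposes; to place $E\in\Sigma$ from $E\in\sigma_{n_k}$ for all $k$ you need the standard Weyl-sequence construction using that the potentials agree on longer and longer intervals. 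This is routine but not what you wrote.)

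The genuine gap is in your treatment of $\Sigma\subseteq B$. Super-exponential growth of $\|M_n(E)\|$ does \emph{not} preclude a polynomially bounded solution: when transfer matrices grow there is generically a one-dimensional contracted direction producing a decaying --- hence polynomially bounded --- solution, and indeed in localized models exponential transfer-matrix growth coexists with decaying eigenfunctions at energies that \emph{are} in the spectrum. So the step ``growth $\Rightarrow$ no polynomially bounded generalized eigenfunction'' does not follow, and a Gordon-type argument cannot rescue it here because Gordon estimates require \emph{bounded} traces, which is exactly what fails in the escaping regime. (Schnol also only gives $\Sigma=\overline{\{E:\exists\text{ poly.\ bdd.\ solution}\}}$, so one would additionally need an openness statement.) The route actually used by S\"ut\H{o} and by \cite{DFG2014} avoids solution estimates entirely: the escape condition (two consecutive half-traces exceeding $1$ in modulus) is open and locally uniform in $E$, so an escaping energy has a neighborhood $U$ with $U\cap\sigma_n=\emptyset$ for all large $n$, while $\Sigma\subseteq\bigcap_N\overline{\bigcup_{n\ge N}\sigma_n}$ by strong resolvent convergence of the periodic approximants. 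A second, continuum-specific omission: the ``bounded'' half of the S\"ut\H{o} dichotomy ($|x_n|\le 1$ infinitely often $\Rightarrow$ bounded orbit, via $x_{n+1}=x_nx_{n-1}\pm\sqrt{(1-x_n^2)(1-x_{n-1}^2)+I}$) requires $I(E)\ge 0$. Unlike the discrete case, the Fricke--Vogt invariant here depends on $E$ and may be negative, and for $I(E)<0$ the surface $S_{I(E)}$ has a compact component carrying trivially bounded orbits; one must show separately that $\gamma(E)$ escapes (equivalently, avoids that component) whenever $I(E)<0$. This is the content of Proposition~\ref{p.nonneginv} and the surrounding analysis in \cite{DFG2014}, and your sketch does not engage with it.
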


There are several substantial differences between the continuum setting and the discrete setting that we should point out. First, in the discrete case, the Fricke--Vogt invariant is constant (viewed as a function of $E$). However, the invariant may enjoy nontrivial dependence on $E$ in the continuum setting, which is demonstrated by examples in \cite{DFG2014}. This dependence is related to new phenomena that emerge in the continuum setting and make its study worthwhile.

Second, the Fricke--Vogt invariant is always non-negative in the discrete setting, but one cannot \textit{a priori} preclude negativity of $I$ in the continuum setting. However, it is proved in \cite{DFG2014} that any energies for which $I(E) < 0$ must lie in the resolvent set of the corresponding continuum Fibonacci Hamiltonian.

\begin{prop}[Damanik--F.--Gorodetski \cite{DFG2014}]\label{p.nonneginv}
For every $E \in \Sigma$, one has $I(E) \ge 0$
\end{prop}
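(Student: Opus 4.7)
The plan is to combine Proposition~\ref{p.sigmab}, the invariance of $I$ under $T$, and the algebraic identity
\[
I(x,y,z) = (x - yz)^2 - (1 - y^2)(1 - z^2),
\]
which follows from direct expansion of the definition. Suppose for contradiction that $E \in \Sigma$ but $I(E) < 0$, and write $T^n\gamma(E) = (x_{n+1}, x_n, x_{n-1})$. Invariance of $I$ along the orbit yields, for every $n \in \Z_+$,
\[
(x_{n+1} - x_n x_{n-1})^2 = (1 - x_n^2)(1 - x_{n-1}^2) + I(E).
\]
Since the left side is non-negative and $I(E) < 0$, we must have $(1 - x_n^2)(1 - x_{n-1}^2) \geq -I(E) > 0$ for every $n$. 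Thus at each step $|x_n|$ and $|x_{n-1}|$ lie strictly on the same side of $1$, and an easy induction yields a global dichotomy: either $|x_n| < 1$ for all $n \geq -1$ (Case A) or $|x_n| > 1$ for all $n \geq -1$ (Case B).

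Case B is excluded by the standard escape lemma for the trace-map recurrence $x_{n+2} = 2 x_{n+1} x_n - x_{n-1}$: three consecutive $|x_n| > 1$ force geometric growth of the sequence along Fibonacci-indexed steps, contradicting the boundedness of the forward orbit provided by Proposition~\ref{p.sigmab} together with $E \in \Sigma$.

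Case A is the crux and requires genuine $SL(2,\R)$ input. Here $|x_\a(E)|, |x_\b(E)| < 1$, so $M_\a(E)$ and $M_\b(E)$ are both elliptic. Writing $x_\a(E) = \cos\alpha$, $x_\b(E) = \cos\beta$ with $\alpha,\beta \in (0,\pi)$ and using the Cartan decomposition $SL(2,\R) = KAK$ (with $K = SO(2)$ and $A$ the diagonal subgroup), I would simultaneously conjugate so that $M_\a(E) = R_\alpha$ is the standard rotation and $M_\b(E) = K_1 A_1 R_\beta A_1^{-1} K_1^{-1}$ with $K_1 \in SO(2)$ and $A_1 = \mathrm{diag}(e^\tau, e^{-\tau})$ for some real $\tau$. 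Since $K_1$ commutes with $R_\alpha$, cyclicity of the trace and a short direct computation give
\[
x_\ab(E) = \cos\alpha\cos\beta - \cosh(2\tau)\sin\alpha\sin\beta,
\]
whence
\[
(x_\ab - x_\a x_\b)^2 - (1 - x_\a^2)(1 - x_\b^2) = \sinh^2(2\tau)\sin^2\alpha\sin^2\beta \geq 0,
\]
forcing $I(E) \geq 0$ and contradicting $I(E) < 0$. The main obstacle is the Cartan-parametrization step in Case A: one must verify that every pair of elliptic elements in $SL(2,\R)$ (with prescribed traces) is captured by this ansatz, and that the resulting two-parameter family really exhausts the relevant configurations. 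The escape step in Case B is routine.
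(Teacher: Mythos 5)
The proposition is imported from \cite{DFG2014}; the present paper gives no proof, so your argument has to stand on its own. The reduction you set up is sound: the identity $I(x,y,z)=(x-yz)^2-(1-y^2)(1-z^2)$, the invariance of $I$ along the orbit, and the resulting strict dichotomy (either $|x_n|<1$ for all $n\ge -1$ or $|x_n|>1$ for all $n\ge -1$) are all correct. Case A is also essentially complete, and the obstacle you flag there is not a real one: any elliptic element of $SL(2,\R)$ is $SL(2,\R)$-conjugate to a rotation $R_{\pm\alpha}$, so after a simultaneous conjugation you may take $M_\a(E)=R_\alpha$; writing the conjugator of $M_\b(E)$ as $K_1A_1K_2$ (Cartan), absorbing $K_2$ into $R_\beta$ (they commute) and $K_1$ via cyclicity of the trace (it commutes with $R_\alpha$) does exhaust every pair, and the residual sign ambiguities in $\alpha,\beta$ are irrelevant to the conclusion $I=\sinh^2(2\tau)\sin^2\alpha\sin^2\beta\ge0$. (Equivalently, via the Fricke identity $I=\tfrac14\bigl(\tr(ABA^{-1}B^{-1})-2\bigr)$, you are proving the classical fact that the commutator of two elliptic elements of $SL(2,\R)$ has trace at least $2$.)

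The genuine gap is Case B, which you declare routine. The standard escape lemma (S\"ut\H{o}) assumes $|x_{n-1}|\le 1<|x_n|,\,|x_{n+1}|$; the condition $|x_{n-1}|\le 1$ is what lets the induction $|x_{n+k+1}|>|x_{n+k}x_{n+k-1}|$ start, since it dominates the subtracted term in $x_{n+2}=2x_{n+1}x_n-x_{n-1}$. In your Case B that hypothesis is \emph{never} available, because every $|x_n|>1$, and ``three consecutive terms of modulus greater than one'' does not by itself force growth: if $|x_{n-1}|$ is huge compared with $|x_nx_{n+1}|$, the bound $|x_{n+2}|\ge 2|x_{n+1}x_n|-|x_{n-1}|$ says nothing. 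You must invoke the invariant a second time. One clean repair: reading $I(x_{n+1},x_n,x_{n-1})=I(E)$ as a quadratic in the third coordinate, whose two roots are $x_{n-1}$ and $x_{n+2}=2x_{n+1}x_n-x_{n-1}$, Vieta gives $x_{n+2}x_{n-1}=x_{n+1}^2+x_n^2-1-I(E)$. If the forward orbit were bounded, put $M=\limsup_n|x_n|\in[1,\infty)$; then $\limsup_n|x_{n+2}x_{n-1}|\le M^2$, while the right-hand side is at least $x_{n+1}^2+|I(E)|$ (using $x_n^2>1$ and $I(E)<0$), whose $\limsup$ is $M^2+|I(E)|>M^2$ --- a contradiction. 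Hence in Case B the orbit is unbounded and $E\notin B=\Sigma$ by Proposition~\ref{p.sigmab}. With that substitution your proof closes.
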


In order to study the fractal dimension of the spectrum, we will use the following theorem from \cite{DFG2014}, which connects local fractal characteristics near an energy $E$ in the spectrum with the value of the invariant at $E$.

\begin{theorem}[Damanik--F.--Gorodetski \cite{DFG2014}] \label{t.fiblocaldim}
There exists a continuous map $D : [0,\infty) \to (0,1]$ that is real-analytic on $\R_+$ with the following properties:
\begin{itemize}

\item[{\rm (i)}] $\dim_\Hd^\loc(\Sigma;E) = D(I(E))$ for each $E \in \Sigma$.

\item[{\rm (ii)}] We have $D(0) = 1$ and $1 - D(I) \sim \sqrt{I}$ as $I \downarrow 0$.

\item[{\rm (iii)}] As $I \to \infty$, we have
$$
\lim_{I \to \infty} D(I) \cdot \log I = 2 \log (1 + \sqrt{2})
$$

\end{itemize}
\end{theorem}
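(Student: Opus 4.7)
The plan is to reduce the local fractal properties of $\Sigma$ near an energy $E$ to the fractal properties of a hyperbolic invariant set of the trace map $T$ living on the level surface $S_{I(E)}$ of the Fricke--Vogt invariant, and then to analyze how these invariants vary with $V = I(E)$.

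First I would show that for every $V > 0$, the map $T|_{S_V}$ is a smooth area-preserving diffeomorphism whose non-wandering set $\Omega_V$ is a locally maximal hyperbolic set (horseshoe) on which $T$ is topologically conjugate to a subshift of finite type coming from the Fibonacci substitution; this is in the spirit of Cantat--Loray-type results for the trace map, and the family $\{\Omega_V\}_{V>0}$ varies real-analytically in $V$ via a holomorphic motion. By Proposition~\ref{p.sigmab} combined with the fact that $\gamma(E) \in S_{I(E)}$, we have $E \in \Sigma$ iff $\gamma(E)$ lies in the stable lamination $W^s(\Omega_{I(E)})$. A monotonicity argument for the transfer-matrix trace in $E$ (a Hellmann--Feynman-type computation applied to $x_\a$, $x_\b$, $x_\ab$) shows that $\gamma$ meets $W^s(\Omega_V) \cap S_V$ transversally at every point of $\Sigma$. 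By the Manning--McCluskey theorem, the local Hausdorff dimension of $\Sigma$ at $E$ then equals the Hausdorff dimension of a transverse slice of $W^s(\Omega_{I(E)})$, which by the McCluskey--Manning formula combined with thermodynamic formalism is a real-analytic function $D(V)$ on $\R_+$, yielding (i) together with real-analyticity on $\R_+$.

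For the small-$V$ asymptotic in (ii), the surface $S_V$ degenerates as $V \downarrow 0$ by acquiring four conical singularities (where the compact and unbounded components of $S_0$ meet), and in a neighborhood of each singularity $T|_{S_V}$ is parabolic-like. A return-map analysis near the singularities, after rescaling their neighborhoods by $\sqrt{V}$, shows that the topological pressure driving Bowen's formula acquires a $\sqrt{V}$ correction, which gives both the continuous extension $D(0) = 1$ and the scaling $1 - D(V) \sim c\sqrt{V}$. For the large-$V$ asymptotic in (iii), as $V \to \infty$ the horseshoe $\Omega_V$ is dominated by a specific periodic orbit of $T$ whose dominant Lyapunov exponent grows like $\log V$ with precise leading coefficient $2\log(1+\sqrt{2})$; this coefficient arises from the golden-ratio-driven recursion inherited from the Fibonacci substitution matrix. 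Combined with boundedness of the topological entropy of $T|_{\Omega_V}$ as $V \to \infty$, Bowen's formula $D(V) \cdot \lambda(V) \approx h_{\mathrm{top}}(\Omega_V)$ then gives $D(V)\log V \to 2\log(1+\sqrt{2})$.

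The main obstacle is the small-$V$ asymptotic in (ii): the conical singularities of $S_0$ produce parabolic local behavior that does not fit into the standard uniformly hyperbolic framework, so one needs quantitative control of the local return maps together with a careful matching between the parabolic and hyperbolic regimes. This is the continuum analogue of the analysis carried out for the discrete Fibonacci Hamiltonian by Damanik and Gorodetski, and it is the most technically demanding ingredient; the transversality of $\gamma$ to $W^s$ and the analyticity in $V$ are, by comparison, consequences of now-standard machinery once the hyperbolic structure on $S_V$ is in hand.
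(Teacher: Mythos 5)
This theorem is not proved in the paper at all: it is quoted verbatim from \cite{DFG2014} (whose proof in turn rests on the discrete-case trace-map technology of Casdagli, Cantat, and Damanik--Gorodetski), so there is no in-paper argument to measure your proposal against. Judged against the proof in the cited literature, your outline does capture the correct architecture: hyperbolicity of the non-wandering set $\Omega_V$ of $T|_{S_V}$ for $V>0$ (Cantat's theorem), transversality of the curve of initial conditions to the stable lamination, the McCluskey--Manning/Bowen identification of the local dimension with the root of a pressure equation (whence real-analyticity in $V$), and asymptotics for $V\downarrow 0$ and $V\to\infty$ imported from the weakly and strongly coupled discrete Fibonacci Hamiltonian.

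Two substantive criticisms. First, your mechanism for (ii) is wrong in an important detail: the dynamics near the four conical singularities of $S_0$ is \emph{not} parabolic. For instance $P=(1,1,1)$ is a fixed point of $T$ at which $DT$ has eigenvalues $\varphi^{2}$, $\varphi^{-2}$, and $-1$ (with $\varphi$ the golden ratio), and the other three singularities form a $3$-cycle with the analogous property; the in-surface expansion and contraction rates are bounded away from $1$. The difficulty as $V\downarrow 0$ is instead geometric: the smooth surface $S_V$ passes within distance of order $\sqrt V$ of the cone points, the hyperbolicity of $T|_{S_V}$ degenerates there, and it is this $\sqrt V$ scale that enters the pressure function and produces $1-D(V)\sim\sqrt V$. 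A parabolic return-map analysis is the wrong local model and would not yield the stated asymptotics. Second, you delegate the genuinely hard continuum-specific step---transversality of $\gamma$ to $W^s(\Omega_{I(E)})$ at \emph{every} $E\in\Sigma$ for arbitrary $f_\a,f_\b$, in a situation where $I(E)$ is non-constant so one must work with the full stable lamination across varying level surfaces rather than within a single $S_V$---to a one-line Hellmann--Feynman computation. Oscillation-theoretic monotonicity is indeed an ingredient, but ruling out tangencies is a substantial part of \cite{DFG2014} and cannot be treated as routine. As a road map to the cited proof your sketch is serviceable; as a proof it has a genuine gap at both of these points.
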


Thus, to study the local fractal dimensions of the spectrum, it suffices to understand the the invariant $I$.

\section{The High-Energy Region} \label{sec:highE}

\begin{proof}[Proof of Theorem~\ref{t:largeE}]
In view of Theorem~\ref{t.fiblocaldim}, it suffices to show that 
\begin{equation} \label{eq:invDecayInE}
\lim_{E \to \infty} I(E) 
= 
0.
\end{equation}
In fact, we will show that $I(E)$ goes to zero as $\mathcal O(E^{-1/2})$, where the implicit constant only depends on $\ell_\a$, $\ell_\b$, $\|f_\a\|$, and $\|f_\b\|$. Here and in what follows, we use $\| \cdot \|$ to denote an $L^2$ norm on an appropriate interval, i.e.,
\[
\|f_\alpha\|
=
\left(\int_0^{\ell_\alpha} |f_\alpha(x)|^2 \, \dd x \right)^{1/2},
\quad
\alpha \in \widetilde\A.
\]
Concretely, let
\[
Q
\eqdef
\max\left(\|f_\ab\|,1\right),
\quad
C 
\eqdef
Q\exp\left(Q \sqrt{\ell_\ab} \right).
\]
For each energy $E \ge 0$, we denote
\[
\kappa = \kappa(E) = \sqrt{E},
\]
and we introduce functions
\[
c_\kappa(x)
\eqdef
\cos(\kappa x),
\quad
s_\kappa(x)
\eqdef
\frac{\sin(\kappa x)}{\kappa},
\quad
x \in \R.
\]
Then, by \cite[Theorem~1.3]{PoTru1987}, one has the following estimates for $\alpha \in \widetilde\A$, $E > 0$, $x \in [0,\ell_\alpha]$:

\begin{align*}
\left| \psi_{\alpha,\Neu}(x,E) - c_\kappa(x) \right|
& \leq 
\frac{1}{\sqrt{E}} \exp\left( \left|\mathrm{Im}\sqrt{E} \right| x + \|f_\alpha\| \sqrt{x} \right) \\
& \leq 
\frac{1}{\sqrt{E}} \exp\left( \|f_\alpha\| \sqrt{\ell_\alpha} \right) \\
& \leq
C\kappa^{-1}
\end{align*}
Similarly, we get
\[
\left| \psi_{\alpha,\Dir}'(x,E) - c_\kappa(x) \right|
\leq 
C\kappa^{-1}
\]
for every $\alpha\in\widetilde{\mathcal A}$, $E > 0$, and $x \in [0,\ell_\alpha]$.
In particular, we get
\[
x_\alpha(E) = c_\kappa(\ell_\alpha) + \mathcal O(\kappa^{-1})
\]
for $\alpha \in \widetilde\A$ and $E > 0$ large. Consequently,
the asymptotics above yield
\begin{align*}
I(E) 
& =
I(x_\ab(E), x_\a(E), x_\b(E)) \\
& =
I(c_\kappa(\ell_\ab), \, c_\kappa(\ell_\a), \, c_\kappa(\ell_\b)) 
+ \mathcal O\left(\kappa^{-1}\right).
\end{align*}
A straightforward calculation reveals that
\[
I(c_\kappa(\ell_\ab), \, c_\kappa(\ell_\a), \, c_\kappa(\ell_\b)) 
=
0.
\]
Thus, $I(E) = \mathcal O(E^{-1/2})$, as desired.
\end{proof}

\begin{remark}
Since we obtain $I(E) = \mathcal O(E^{-1/2})$ as $E \to \infty$, we can actually use Theorem~\ref{t.fiblocaldim} to obtain a quantitative lower bound on the rate at which the Hausdorff dimension of the spectrum tends to one. Namely, there exists a constant $c = c(f_\a,f_\b) > 0$ with the property that
\[ 
\dim_\Hd^\loc(\Sigma;E) 
\geq 
\max\left(1 - cE^{-1/4},0\right)
\]
for all $E \in \Sigma \cap \R_+$.
\end{remark}

\section{The Small Coupling Regime} \label{sec:smalllambda}

For the regime of small coupling, we fix $f_\a$ and $f_\b$ as before so that $(\Omega,f_\a,f_\b)$ is aperiodic, fix a Fibonacci-type potential $V_\omega$, and ask about the behavior of
\[
H_{\omega,\lambda}
=
-\frac{\dd^2}{\dd x^2} + \lambda V_\omega
\]
as $\lambda \to 0$. We now view the relevant spectral data as functions of $\lambda$ as well, and we write $\Sigma_\lambda$ for the common spectrum of $H_{\omega,\lambda}$. Similarly, we view the Dirichlet and Neumann solutions as functions of $\lambda$, so we write $\psi_{\alpha,\mathrm B}(\cdot,E,\lambda)$ for the solution of
\[
-y'' + \lambda f_\alpha y = Ey
\]
obeying the appropriate boundary condition for $\mathrm B \in \set{\Dir,\Neu}$. Then, the transfer matrices and invariant are also functions of $\lambda$, and we denote them by $M_\alpha(E,\lambda)$ and $I(E,\lambda)$ to reflect this dependence.

\begin{proof}[Proof of Theorem~\ref{t:smalllambda}]

By Theorem~\ref{t.fiblocaldim}, it suffices to show that $I(E,\lambda)$ goes to zero uniformly in $E > 0$ as $\lambda \to 0$. To that end, let $\e > 0$ be given. By the estimates in the proof of Theorem~\ref{t:largeE}, we may choose a compact set $K \subset \R$ such that
\[
|I(E,\lambda)|
< \e
\]
whenever $|\lambda| \leq 1$ and $E \in \R_+ \setminus K$. By \cite[Theorem~5]{PoTru1987}, we have that
\[
\psi_{\alpha,\Neu}(\ell_\alpha,E,\lambda)
\to
c_\kappa(\ell_\alpha)
\]
as $\lambda \to 0$, uniformly for $\alpha \in \widetilde \A$, $E \in K$. Thus, we may choose $\lambda_0 > 0$ so that
\begin{align}
\label{eq:smallcoup:solest}
\left| \psi_{\alpha,\Neu}(\ell_\alpha,E,\lambda) - c_\kappa(\ell_\alpha) \right|
& < 
\e
\end{align}
for all $\alpha \in \widetilde \A$ and $E \in K$  whenever $|\lambda| < \lambda_0$. 

Differentiating the integral equation from \cite[Theorem~1]{PoTru1987} , we get that
\[
\psi_{\alpha,\Dir}'(\ell_\alpha,E,\lambda) 
= 
c_\kappa(\ell_\alpha) + \int_{0}^{\ell_\alpha} \cos(\kappa(\ell_\alpha-t)) \cdot \lambda f_{\alpha}(t) \cdot \psi_{\alpha,\Dir}(t,E,\lambda)\ dt.
\]
Again by \cite[Theorem~5]{PoTru1987}, $\psi_{\alpha,\Dir}(t,E,\lambda) \to s_\kappa(t)$ as $\lambda \to 0$, uniformly for $E \in K$ and $t \in [0,\ell_\alpha]$. Consequently (possibly after shrinking $\lambda_0$) we obtain
\begin{align}
\label{eq:smallcoup:solestD}
\left| \psi_{\alpha,\Dir}'(\ell_\alpha,E,\lambda)- c_\kappa(\ell_\alpha) \right|
& < 
\e
\end{align}
for all $\alpha \in \widetilde \A$ and $E \in K$ whenever $|\lambda| < \lambda_0$. Therefore, by (\ref{eq:smallcoup:solest}) and (\ref{eq:smallcoup:solestD}),
\[
I(E,\lambda) = I(E,0) + \mathcal O(\varepsilon)
\]
for $E \in K$ and $|\lambda| < \lambda_0$, with a uniform implicit constant. Since $I(E,0) = 0$, we have $I(E,\lambda) = \mathcal O(\e)$ for all $|\lambda| < \lambda_0$, as desired.
\end{proof}

\section*{Acknowledgements}
M.\ M.\ would like to thank Virginia Tech for hosting her for the Fall 2016 semester, when much of this work was conducted. The authors gratefully acknowledge Mark Embree for helpful conversations.


\begin{thebibliography}{00}

\bibitem{BBG2015} M.\ Baake, M.\ Birkner, U.\ Grimm, Non-periodic systems with continuous diffraction measures, \textit{Mathematics of aperiodic order}, 1--32, \textit{Prog.\ Math.\ Phys.}\ \textbf{309}, Birkh\"auser/Springer, Basel, 2015. 

\bibitem{BJK1992}  M.\ Baake, D.\ Joseph, P.\ Kramer, Periodic clustering in the spectrum of quasiperiodic Kronig-Penney models, \textit{Phys.\ Lett.\ A} \textbf{168} (1992), 199--208.

\bibitem{Cas1986CMP} M.\ Casdagli, Symbolic dynamics for the renormalization map of a quasi-periodic Schr\"odinger equation, \textit{Commun.\ Math.\ Phys.}\ \textbf{107} (1986), 295--318.

\bibitem{Damanik2007:subshifts} D.\ Damanik, Strictly ergodic subshifts and associated operators, in \textit{Spectral Theory and Mathematical Physics: a Festschrift in Honor of Barry Simon's 60th Birthday}, 505--538, Proc.\ Sympos.\ Pure Math.\ \textbf{76}, Part 2, Amer. Math. Soc., Providence, RI, 2007.

\bibitem{DEG2015} D.\ Damanik, M.\ Embree, A.\ Gorodetski, Spectral properties of Schr\"odinger operators arising in the study of quasicrystals, \textit{Mathematics of aperiodic order}, 307--370, \textit{Prog.\ Math.\ Phys.}\ \textbf{309}, Birkh\"auser/Springer, Basel, 2015. 

\bibitem{DFG2014} D.\ Damanik, J.\ Fillman, A.\ Gorodetski, Continuum Schr\"odinger operators associated with aperiodic subshifts, \textit{Ann.\ Henri Poincar\'e} , \textbf{15} (2014), 1123--1144.

\bibitem{DFO2016} D.\ Damanik, J.\ Fillman, D.\ C.\ Ong, Spreading Estimates for Quantum Walks on the Integer Lattice via Power-Law Bounds on Transfer Matrices, \textit{J.\ Math.\ Pures Appl.}\ \textbf{105} (2016), 293--341.

\bibitem{DFV2014} D.\ Damanik, J.\ Fillman, R.\ Vance, Dynamics of unitary operators, \textit{J.\ Fractal Geom.}\ \textbf{1} (2014), 391--425.

\bibitem{DGY} D.\ Damanik, A.\ Gorodetski, W.\ Yessen, The Fibonacci Hamiltonian, \textit{Invent.\ Math.}\ \textbf{206} (2016), 629--692.

\bibitem{DamLenz2007} D.\ Damanik, D.\ Lenz, Uniform Szeg\H{o} cocycles over strictly ergodic subshifts, \textit{J.\ Approx.\ Theory} \textbf{144} (2007), 133--138.

\bibitem{DMY2013JAT} D.\ Damanik, P.\ Munger, W.\ Yessen, Orthogonal polynomials on the unit circle with Fibonacci Verblunsky coefficients, I.~The essential support of the measure,  \textit{J.\ Approx.\ Th.}\ \textbf{173} (2013), 56--88.

\bibitem{DMY2013JSP} D.\ Damanik, P.\ Munger, W.\ Yessen, Orthogonal polynomials on the unit circle with Fibonacci Verblunsky coefficients, II. Applications, \textit{J.\ Stat.\ Phys.}\ \textbf{153} (2013), 339--362.

\bibitem{F2017PAMS} J.\ Fillman, Purely singular continuous spectrum for Sturmian CMV matrices via strengthened Gordon Lemmas, \textit{Proc.\ Amer.\ Math.\ Soc.}\ \textbf{145} (2017), 225--239.

\bibitem{FTY2016} J.\ Fillman, Y.\ Takahashi, W.\ Yessen, Mixed spectral regimes for square Fibonacci Hamiltonians, \textit{J.\ Fract.\ Geom.}, \textbf{3} (2016), 377--405.

\bibitem{Ghosh1991} P.\ Ghosh, The Kronig-Penney model on a generalized Fibonacci lattice, \textit{Phys.\ Lett.\ A} \textbf{161} (1991), 153--157.

\bibitem{Holzer1988} M.\ Holzer, Three classes of one-dimensional, two-tile Penrose tilings and the Fibonacci Kronig-Penney model as a generic case, \textit{Phys.\ Rev.\ B}  \textbf{38} (1988), 1709--1720.

\bibitem{KN2002} M.\ Kaminaga, F.\ Nakano, Spectral properties of quasiperiodic Kronig-Penney model, \textit{Tsukuba J.\ Math.}\ \textbf{26} (2002), 205--228.

\bibitem{KLS2011} S.\ Klassert, D.\ Lenz, P.\ Stollmann, Delone measures of finite local complexity and applications to spectral theory of one-dimensional continuum models of quasicrystals, \textit{Discrete Contin.\ Dyn.\ Syst.}\ \textbf{29} (2011), 1553--1571.

\bibitem{KKT1983} M.\ Kohmoto, L.\ P.\ Kadanoff, C.\ Tang, Localization problem in one dimension: mapping and escape, \textit{Phys.\ Rev.\ Lett.}\ \textbf{50} (1983), 1870--1872.

\bibitem{KS1986} J.\ Kollar, A. S\"ut\H{o}, The Kronig-Penney model on a Fibonacci lattice, \textit{Phys.\ Lett.\ A} \textbf{117} (1986), 203--209.

\bibitem{LSS2014} D.\ Lenz, C.\ Seifert, P.\ Stollman, Zero measure Cantor spectra for continuum one-dimensional quasicrystals, \textit{J.\ Diff.\ Eq.}\ \textbf{256} (2014), 1905--1926.

\bibitem{MO2014} P.\ Munger, D.\ Ong, The H\"older continuity of spectral measures of an extended CMV matrix, \textit{J.\ Math.\ Phys.}\ \textbf{55} (2014), no.\ 9, 093507, 10 pp. 

\bibitem{OPRSS1983} S.\ Ostlund, R.\ Pandit, D.\ Rand, H.\ J.\ Schnellnhuber, E.\ D.\ Siggia, One-dimensional Schr\"odinger equation with an almost-periodic potential, \textit{Phys.\ Rev.\ Lett.}\ \textbf{50} (1983), 1873--1876.

\bibitem{PoTru1987} J.\ P\"oschel, E.\ Trubowitz, \textit{Inverse Spectral Theory}, Academic Press.

\bibitem{Suto87} A.\ S\"ut\H{o}, The spectrum of a quasiperiodic Schr\"odinger operator, \textit{Commun.\ Math.\ Phys.}\ \textbf{111} (1987), 409--415.

\bibitem{Suto89} A.\ S\"ut\H{o}, Singular continuous spectrum on a Cantor set of zero Lebesgue measure for the Fibonacci Hamiltonian, \textit{J.\ Stat.\ Phys.} {\bf 56} (1989), 525--531.

\bibitem{SBGC84}D.\ Shechtman, I.\ Blech, D.\ Gratias, and J.W.\ Cahn.
Metallic phase with long-range orientational order and no  translational symmetry,
\textit{Phys. Rev. Lett.}, {\bf 53.20} (1984), 1951--1953.

\bibitem{TK1989} U.\ Thomas, P.\ Kramer, The Fibonacci quasicrystal reconsidered: variety of energy spectra for continuous Schr\"odinger equations with simple potentials, \textit{Internat.\ J.\ Modern Phys.\ B} \textbf{3} (1989), 1205--1235.

\bibitem{WSS1988} D.\ W\"urtz,  M.\ Soerensen, T.\ Schneider, Quasiperiodic Kronig-Penney model on a Fibonacci superlattice, \textit{Helv.\ Phys.\ Acta} \textbf{61} (1988), 345--362.

\bibitem{Yessen2013} W.\ Yessen, Spectral analysis of tridiagonal Fibonacci Hamiltonians, \textit{J.\ Spectr.\ Theory}, \textbf{3} (2013), 101--128.

\end{thebibliography}
\end{document}